\newtheorem{theo}{Theorem}[section]
\newtheorem{lemm}[theo]{Lemma}
\newcounter{c}
\newcounter{d}
\newcounter{b}
\newcommand{\cc}[1][]{\refstepcounter{c}#1\arabic{c}} 
\newcommand{\ce}[1][]{\refstepcounter{d}#1\arabic{d}}
\newcommand{\be}[1][]{\refstepcounter{b}#1\arabic{b}}
\newcommand{\Na}{\mathbb{N}} 
\newcommand{\Z}{\mathbb{Z}} 
\newcommand{\R}{\mathbb{R}} 
\newcommand{\T}{\mathbb {T}} 
\newcommand{\e}{\varepsilon} 
\newcommand{\di}{\mathrm{div}} 
\title[Prescribed mean curvature]{Prescribed mean curvature equation on torus}
\author{Yuki Tsukamoto}
\address{Department of Mathematics, Tokyo Institute of Technology,
	152-8551, Tokyo, Japan}
\email{tsukamoto.y.ag@m.titech.ac.jp}
\date{}
\subjclass[2010]{ 35J93}
\keywords{Prescribed mean curvature, Fixed-point theorem}
\begin{document}
\maketitle
\begin{abstract}	
Prescribed mean curvature problems on the torus has been considered in one dimension.
In this paper, we prove the existence of a graph on the $n$-dimensional torus $\T^n$, 
the mean curvature vector of which equals the normal component
of a given vector field satisfying suitable conditions for a Sobolev norm,
the integrated value, and monotonicity.
\end{abstract}

	\section{Introduction}

In this paper, we consider the following prescribed mean 
curvature problem on torus $\T^n:=\R^n/ \Z^n$:
\begin{align}
- \di \left(\frac{\nabla u}{\sqrt{1+|\nabla u|^2}} \right)= \nu(\nabla u) \cdot g(x,u(x)) \quad 
\mathrm{on} \  \T^n, \label{first}
\end{align}
where $\nu$ is the unit normal vector of $u$, that is,
$\nu(z)=\frac{1}{\sqrt{1+|z|^2}} (-z, 1)$.
The vector field $g(x,x^{n+1}): \T^n \times \R \to \R^{n+1}$ is given, and we
seek a solution $u$ satisfying \eqref{first}.
The left-hand side of \eqref{first} represents the mean curvature of the graph of $u$,
and the right-hand side is the normal component of the vector field $g$ on the graph.

In the case of Dirichlet conditions of a bounded domain $\Omega \subset \R^n$,
prescribed mean curvature problems have been studied by numerous researchers.
Bergner \cite{B08} solved the Dirichlet problem in the case where the right-hand side of \eqref{first}
is $H=H(x,u,\nu(\nabla u))$ under the assumptions of boundedness ($|H|<\infty$),
monotonicity ($\partial_{n+1} H \geq 0$), and convexity of $\Omega$.
Under the same conditions for the function $H$,
Marquardt \cite{M10} imposed a condition on $\partial\Omega$ depending on $H$
that guarantees the existence of solutions even for a domain $\Omega$ that is not necessarily convex. 
In \cite{T20}, we proved the existence of a solution only under the condition that
the Sobolev norm of $H$ is sufficiently small.
In the case of a compact Riemannian manifold,
Aubin \cite{A98} solved the linear elliptic problem $- \partial_i[a_{ij}(x) \partial_j u] = H(x)$ 
if the integrated value of $H$ is zero. The assumption of the
integrated value plays an important role in the existence of solutions to elliptic equations
on a compact Riemannian manifold.
Denny \cite{D10} solved the quasilinear elliptic problem $- \di (a(u(x)) \nabla u)=H(x)$ on the torus $\T^n$ with $n=2,3$.
Prescribed mean curvature problems on the one-dimensional torus $\left(\frac{u'}{\sqrt{1+(u')^2}}\right)' =H(x,u,u')$ have been investigated 
for a wide variety of conditions $H$ (refer to \cite{F12,L10,L15,P11,P11b,Z15}, for example).

As we noted in \cite{T20}, the motivation for the present study comes from a singular perturbation problem, and we proved the following in \cite{T19}. Suppose a constant $\e>0$ and functions $\phi_\e \in W^{1,2}$ and $g_\e \in W^{1,p}$, with $p>\frac{n+1}{2}$ satisfy 
\begin{align}
&-\varepsilon\Delta \phi_{\varepsilon}+\frac{W'(\phi_{\varepsilon})}{\varepsilon}=\varepsilon\nabla\phi_{\varepsilon}\cdot g_{\varepsilon}, \\
	&\int \left(\frac{\varepsilon|\nabla\phi_{\varepsilon}|^2}{2}
+\frac{W(\phi_{\varepsilon})}{\varepsilon}\right)\,dx+
\|g_{\varepsilon}\|_{W^{1,p}(\tilde\Omega)}\leq C,
\end{align}
where $W$ is a double-well potential such as $W(\phi)=(1-\phi^2)^2$.
Then, the interface $\{\phi_\e = 0 \}$ converges locally in the Hausdorff distance to a
surface having a mean curvature given by $\nu\cdot g $ as $\e \to 0$. Here, $\nu$ is the unit normal vector of the surface, and $g$ is the weak $W^{1,p}$ limit of $g_\e$.
If the surface is represented locally as a graph of a function $u$ on $\T^n$, 
we can observe that $u$ satisfies \eqref{first}.
In this paper, we prove the existence of solutions to \eqref{first} assuming 
that the Sobolev norm of $g$ is sufficiently small, $g^{n+1}$ for the $n+1$th component is monotonous,
and the integrated value of $g^{n+1}$ is zero. 
The following theorem is the main result.

\begin{theo} \label{MT}
	Fix $\frac{n+1}{2}<p<n+1$ and $q =\frac{np}{n+1-p}$. 
	Then, there exists a constant $\e_{\ce \label{ce1}}=\e_{\ref{ce1}}(n,p)>0$
	with the following property.
	If $\e<\e_{\ref{ce1}}$, and $g =(g^1,\ldots, g^n,g^{n+1})=(g',g^{n+1}) \in W^{1,p}(\T^n \times (-1,1);\R^{n+1})$ satisfies \eqref{t1e1}--\eqref{t1e3},
	\begin{align}
	 \|g\|_{W^{1,p}(\T^n \times (-1,1))}&< \e^\frac23, \label{t1e1}\\
      \partial_{n+1} g^{n+1}(x,x^{n+1}) &>  \e+\e^\frac12| \partial_{n+1} g'(x,x^{n+1}) |  ,
    \label{t1e4} \\
     \int_{\T^n} g^{n+1}(x,0) &=0, \label{t1e3} 
	\end{align}
	then there exists a function $u \in W^{2,q}(\T^n)$ such that
	\begin{align}
		- \di \left(\frac{\nabla u}{\sqrt{1+|\nabla u|^2}} \right)= \nu(\nabla u) \cdot g(x,u(x)) \quad 
	\mathrm{on} \  \T^n. \label{t0e1}
		\end{align}
	Moreover, the following inequality holds:
	\begin{align}
	\left\|u - \int_{\T^n} u (y)\ dy\right\|_{W^{2,q}(\T^n)} \leq \e^\frac12.
	\end{align}
\end{theo}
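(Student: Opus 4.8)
\emph{Reformulation and strategy.}
Multiplying \eqref{t0e1} by $\sqrt{1+|\nabla u|^2}$ shows it is equivalent to $-\Delta u=\mathcal N(u)$, where
\[
\mathcal N(u):=-\frac{\nabla u\cdot D^2u\,\nabla u}{1+|\nabla u|^2}-\nabla u\cdot g'(x,u(x))+g^{n+1}(x,u(x)).
\]
Since $\int_{\T^n}\Delta u=0$ and $\Delta$ on $\T^n$ is invertible only on mean-zero functions, a solution must be produced together with the compatibility condition $\int_{\T^n}\mathcal N(u)=0$. I would therefore seek $u=c+v$ with $c\in\R$ and $\int_{\T^n}v=0$, write $Pf:=f-\int_{\T^n}f$ and $G:=(-\Delta)^{-1}$ on mean-zero $L^q$ functions, and split \eqref{t0e1} into the fixed-point equation $v=GP\,\mathcal N(c+v)$ together with the scalar equation $\int_{\T^n}\mathcal N(c+v)=0$ that will pin down $c$. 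Because $\nabla(c+v)=\nabla v$ and $D^2(c+v)=D^2v$, $\mathcal N(c+v)$ depends on $c$ only through the composition $g(\cdot,(c+v)(\cdot))$. I will work in a ball $\mathcal B$ of radius a small fixed multiple of $\e^{1/2}$ in the mean-zero subspace of $W^{2,q}(\T^n)$; since $p>\frac{n+1}{2}$ forces $q>n$, one has $W^{2,q}(\T^n)\hookrightarrow C^1(\T^n)$, so every $v\in\mathcal B$ satisfies $\|v\|_{L^\infty}\le\e^{1/2}$ and $\|\nabla v\|_{L^\infty}\le\e^{1/2}$.

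\emph{Choosing $c=\tilde c(v)$ (where \eqref{t1e4} and \eqref{t1e3} enter).}
For $v\in\mathcal B$ and $c'\in[-\tfrac12,\tfrac12]$, so that $c'+v$ has range in $(-1,1)$, set $\Lambda_v(c'):=\int_{\T^n}\mathcal N(c'+v)$. Only the last two terms of $\mathcal N$ depend on $c'$, and a Fubini argument shows $\Lambda_v$ is absolutely continuous with
\[
\Lambda_v'(c')=\int_{\T^n}\big(\partial_{n+1}g^{n+1}(x,c'+v)-\nabla v\cdot\partial_{n+1}g'(x,c'+v)\big)\,dx,
\]
which is $\ge\e$ by \eqref{t1e4} since $\|\nabla v\|_{L^\infty}\le\e^{1/2}$; hence $\Lambda_v$ is strictly increasing. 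Writing $g^{n+1}(x,c'+v)=g^{n+1}(x,0)+\int_0^{c'+v(x)}\partial_{n+1}g^{n+1}(x,s)\,ds$, using $\int_{\T^n}g^{n+1}(x,0)=0$ and $\partial_{n+1}g^{n+1}>\e>0$, and controlling the two small terms of $\mathcal N$ by \eqref{t1e1}, one gets $\Lambda_v(\tfrac12)>0$ and $\Lambda_v(-\tfrac12)<0$, with a margin of order $\e$, once $\e$ is small. Thus there is a unique $\tilde c(v)\in(-\tfrac12,\tfrac12)$ with $\Lambda_v(\tilde c(v))=0$, and the bound $\Lambda_v'\ge\e$ together with continuity of $v\mapsto\Lambda_v$ makes $\tilde c$ continuous on $\mathcal B$.

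\emph{The fixed-point argument.}
Define $S(v):=GP\,\mathcal N(\tilde c(v)+v)$, which is mean-zero. If $S(v^\ast)=v^\ast$, then with $c^\ast:=\tilde c(v^\ast)$ and $u:=c^\ast+v^\ast$ one has $-\Delta u=\mathcal N(u)-\int_{\T^n}\mathcal N(u)=\mathcal N(u)$, the mean vanishing by the choice of $\tilde c$, i.e.\ $u$ solves \eqref{t0e1}. For the \emph{self-map} bound I use the Calder\'on--Zygmund estimate $\|S(v)\|_{W^{2,q}}\le C(n,q)\|\mathcal N(\tilde c(v)+v)\|_{L^q}$ and estimate the three terms of $\mathcal N$ using $1+|\nabla v|^2\ge1$, $\|\nabla v\|_{L^\infty}\le\e^{1/2}$, $\|D^2v\|_{L^q}\le\e^{1/2}$, and the trace inequality
\[
\|g(\cdot,w(\cdot))\|_{L^q(\T^n)}\le C(n,p)\,\|g\|_{W^{1,p}(\T^n\times(-1,1))},
\]
valid with a constant uniform over graphs $w$ of small Lipschitz constant and range in $(-1,1)$ precisely because $q=\frac{np}{n+1-p}$ is the Sobolev exponent of the trace space $W^{1-1/p,p}$ of an $n$-dimensional hypersurface in $\R^{n+1}$. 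Together with \eqref{t1e1} this yields $\|\mathcal N(\tilde c(v)+v)\|_{L^q}\le C\e^{2/3}$, hence $\|S(v)\|_{W^{2,q}}\le C\e^{2/3}\le\e^{1/2}$ once $\e<\e_{\ref{ce1}}(n,p)$; in particular a fixed point satisfies $\big\|u-\int_{\T^n}u\big\|_{W^{2,q}}=\|v^\ast\|_{W^{2,q}}\le C\e^{2/3}\le\e^{1/2}$, the asserted inequality. For \emph{continuity}: if $v_j\to v$ in $W^{2,q}$ then $w_j:=\tilde c(v_j)+v_j\to w$ in $C^1$, and approximating $g$ in $W^{1,p}(\T^n\times(-1,1))$ by smooth vector fields and invoking the uniform trace inequality gives $g(\cdot,w_j(\cdot))\to g(\cdot,w(\cdot))$ in $L^q$, whence (with $D^2v_j\to D^2v$ in $L^q$ and $\nabla v_j\to\nabla v$ in $C^0$) $\mathcal N(w_j)\to\mathcal N(w)$ in $L^q$ and $S$ is continuous. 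For \emph{compactness}, $S(\mathcal B)$ is bounded in $W^{2,q}(\T^n)$, hence precompact in $W^{2,r}(\T^n)$ for any $n<r<q$ by Rellich--Kondrachov. I would therefore apply the Schauder fixed-point theorem to $S$ on the $W^{2,r}$-ball of radius a small multiple of $\e^{1/2}$ (all the above estimates persist in the weaker topology), obtain a fixed point $v^\ast$, and then bootstrap via $v^\ast=GP\,\mathcal N(\tilde c(v^\ast)+v^\ast)\in W^{2,q}(\T^n)$; setting $u:=\tilde c(v^\ast)+v^\ast$ completes the argument.

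\emph{Expected main obstacle.}
Two points need care. The kernel of $\Delta$ on $\T^n$ obstructs direct solvability, and the device $c=\tilde c(v)$ is exactly where \eqref{t1e4} (to get $\Lambda_v'\ge\e>0$, so $\tilde c$ is well-defined and unique) and \eqref{t1e3} (to fix the sign of $\Lambda_v$ at $\pm\tfrac12$) are genuinely used; without monotonicity the scalar equation need not be solvable. More seriously, since $g$ is only $W^{1,p}$ with $p<n+1$, the composition $g(\cdot,u(\cdot))$ is a trace onto the $n$-dimensional graph of $u$, and the map $v\mapsto g(\cdot,(\tilde c(v)+v)(\cdot))\in L^q(\T^n)$ is continuous but \emph{not} Lipschitz --- a Lipschitz estimate would amount to restricting $\partial_{n+1}g\in L^p$ to hypersurfaces, which is meaningless. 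This is why the scheme must rely on Schauder rather than a Banach contraction, and why the $W^{2,q}$ regularity of $u$ is only recovered a posteriori, by bootstrapping a fixed point first found in $W^{2,r}$ with $n<r<q$. The routine points I have not detailed are the constant-chasing in the self-map estimate, the standard uniform trace inequality for graphs of small Lipschitz constant, and the Fubini / absolute-continuity justifications for $\Lambda_v$.
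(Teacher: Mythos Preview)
Your overall architecture mirrors the paper's: split $u=c+v$ with $v$ mean-zero, use \eqref{t1e4} and \eqref{t1e3} to select $c=\tilde c(v)$ by a monotonicity/intermediate-value argument, invoke the trace inequality $\|g(\cdot,w(\cdot))\|_{L^q}\le C\|g\|_{W^{1,p}}$ for the self-map bound, and close by a fixed-point theorem. The paper differs in two technical choices: it mollifies $g$ to $g_\lambda$, runs the whole scheme for each $\lambda$, and passes to the limit $\lambda\to0$ at the end; and it linearises in divergence form $-\di\big(\nabla u/\sqrt{1+|\nabla v|^2}\big)=\nu(\nabla v)\cdot g_\lambda(x,v+c_v)$ rather than rewriting as $-\Delta u=\mathcal N(v)$.

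There is, however, a genuine gap in your compactness step. The assertion that a $W^{2,q}(\T^n)$-bounded set is precompact in $W^{2,r}(\T^n)$ for $r<q$ ``by Rellich--Kondrachov'' is false: Rellich--Kondrachov requires a strict loss of derivatives, and $W^{2,q}\hookrightarrow W^{2,r}$ is never compact (take $|k|^{-2}e^{2\pi ik\cdot x}$ with $|k|\to\infty$). Because your nonlinearity $\mathcal N$ carries the Hessian term $\nabla v\cdot D^2v\,\nabla v/(1+|\nabla v|^2)$, you cannot retreat to a space such as $C^1$ where genuine compactness is available; and on the $W^{2,r}$-ball one only obtains $\mathcal N(\tilde c(v)+v)\in L^r$, hence $S(v)\in W^{2,r}$ with no extra regularity, so neither Schauder compactness nor the proposed a-posteriori bootstrap from $W^{2,r}$ to $W^{2,q}$ goes through. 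The paper sidesteps this by replacing Schauder with the Arino--Gautier--Penot fixed-point theorem for \emph{weakly sequentially continuous} maps on the weakly compact convex ball $\mathcal A(\e^{1/2})\subset W^{2,q}_{ave}$. Your map $S$ is in fact weakly sequentially continuous on $W^{2,q}$ (if $v_j\rightharpoonup v$ in $W^{2,q}$ then $\nabla v_j\to\nabla v$ in $C^0$ and $D^2v_j\rightharpoonup D^2v$ in $L^q$, so the Hessian term converges weakly in $L^q$, the trace term strongly, and $GP$ is weak-to-weak continuous), so the same device would repair your argument and deliver the fixed point directly in $W^{2,q}$.
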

The assumptions \eqref{t1e1} and \eqref{t1e4} guarantee the existence and uniqueness
of solutions to the linearized problem of \eqref{first} where a given function depends on $\nabla u$.
\eqref{t1e3} is necessary for the existence of solutions to elliptic equations on the torus.
To our knowledge, prescribed mean curvature problems on the torus in the general dimension have been insufficiently studied. However, we have proved the existence of the solution under natural assumptions.

The following is method of proof. We first find the conditions of $H$ for the linearized problem of \eqref{first}
$- \di \left(\frac{\nabla u}{\sqrt{1+|\nabla v|^2}} \right)= H $ to have a unique solution. If we add a suitable constant term for any $v$, the function $\nu(\nabla v) \cdot g(x,v(x)) $
satisfies the conditions.
By estimating the norm of this solution with $g$, the mapping $T(v)=u$ has a fixed point
using a fixed-point theorem, and Theorem \ref{MT} follows.

	\section{Proof of Theorem \ref{MT}}
A theorem that holds in Euclidean space also holds on a torus, as we consider a function on a torus to be a periodic function in Euclidean space.

Let $X(\T^n)$ be a function space on $\T^n$. We define a subspace $X_{ave}(\T^n) \subset X(\T^n)$ as
\[X_{ave} := 
\{ w \in X; \int_{\T^n} w =0\}.\]

\begin{theo} \label{theo1}
	Suppose $v \in C^1(\T^n)$ and $H \in L^2_{ave}(\T^n) $.
	Then, there exists a unique function $u \in W^{1,2}_{ave}(\T^n)$ such that
	\[
	\int_{\T^n} \frac{\nabla u \cdot \nabla \phi}{\sqrt{1+|\nabla v|^2}}
	= \int_{\T^n} H \phi
	\]
	for all $\phi \in W^{1,2}(\T^n)$.
\end{theo}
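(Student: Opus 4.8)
The plan is to set this up as a standard coercive bilinear form on the Hilbert space $W^{1,2}_{ave}(\T^n)$ and invoke Lax--Milgram. First I would fix $v \in C^1(\T^n)$ and observe that $a(x) := (1+|\nabla v(x)|^2)^{-1/2}$ is a continuous function on the compact space $\T^n$ satisfying $\lambda \le a(x) \le 1$ for some $\lambda = \lambda(v) > 0$ (namely $\lambda = (1+\|\nabla v\|_\infty^2)^{-1/2}$). Then define
\[
B(u,\phi) := \int_{\T^n} a(x)\, \nabla u \cdot \nabla \phi, \qquad L(\phi) := \int_{\T^n} H\phi
\]
for $u,\phi \in W^{1,2}_{ave}(\T^n)$. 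Boundedness of $B$ is immediate from $a \le 1$ and Cauchy--Schwarz; boundedness of $L$ follows from $H \in L^2$ and Cauchy--Schwarz (and $L^2 \hookrightarrow (W^{1,2})^*$).

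The key step is coercivity, and this is where the zero-average constraint and the compactness of the torus enter. For $u \in W^{1,2}_{ave}(\T^n)$ the Poincar\'e inequality on $\T^n$ gives $\|u\|_{L^2} \le C_P \|\nabla u\|_{L^2}$ with $C_P = C_P(n)$, hence $\|u\|_{W^{1,2}}^2 \le (1+C_P^2)\|\nabla u\|_{L^2}^2$. Combined with the lower bound $a \ge \lambda$ this yields
\[
B(u,u) = \int_{\T^n} a\,|\nabla u|^2 \ge \lambda \|\nabla u\|_{L^2}^2 \ge \frac{\lambda}{1+C_P^2}\,\|u\|_{W^{1,2}}^2,
\]
so $B$ is coercive on $W^{1,2}_{ave}(\T^n)$. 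Since $W^{1,2}_{ave}(\T^n)$ is a closed subspace of the Hilbert space $W^{1,2}(\T^n)$, it is itself a Hilbert space, and Lax--Milgram produces a unique $u \in W^{1,2}_{ave}(\T^n)$ with $B(u,\phi) = L(\phi)$ for all $\phi \in W^{1,2}_{ave}(\T^n)$.

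It remains to upgrade the test-function class from $W^{1,2}_{ave}$ to all of $W^{1,2}(\T^n)$. Given arbitrary $\phi \in W^{1,2}(\T^n)$, write $\phi = \bar\phi + \tilde\phi$ where $\bar\phi = \int_{\T^n}\phi$ is constant and $\tilde\phi \in W^{1,2}_{ave}(\T^n)$. Then $\nabla\phi = \nabla\tilde\phi$, so $B(u,\phi) = B(u,\tilde\phi)$; and since $H \in L^2_{ave}$ we have $\int_{\T^n} H\bar\phi = \bar\phi \int_{\T^n} H = 0$, so $L(\phi) = L(\tilde\phi)$. Hence the identity $B(u,\phi) = L(\phi)$ for all $\phi \in W^{1,2}(\T^n)$ follows from the case $\phi \in W^{1,2}_{ave}(\T^n)$ already established, and uniqueness within $W^{1,2}_{ave}(\T^n)$ is part of the Lax--Milgram conclusion. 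The only mild subtlety — not really an obstacle — is being careful that the constant $\lambda$ (and hence the quantitative bound on $u$, which will matter later for the fixed-point argument) depends on $\|\nabla v\|_\infty$; for the bare existence/uniqueness statement this dependence is harmless since $v \in C^1(\T^n)$ makes $\|\nabla v\|_\infty$ finite.
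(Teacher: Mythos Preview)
Your proposal is correct and follows essentially the same route as the paper: define the bilinear form $B$ on $W^{1,2}_{ave}(\T^n)$, check boundedness and coercivity (the latter via Poincar\'e using the zero-average condition and the lower bound $a \ge (1+\|\nabla v\|_\infty^2)^{-1/2}$), apply Lax--Milgram, and then extend the test-function identity from $W^{1,2}_{ave}$ to $W^{1,2}$ by splitting off the constant part and using $\int_{\T^n} H = 0$. Your write-up is in fact slightly more explicit than the paper's about the Poincar\'e constant and about $W^{1,2}_{ave}$ being a closed subspace; nothing further is needed.
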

\begin{proof}
	We define a function $B:W^{1,2}_{ave}(\T^n) \times W^{1,2}_{ave}(\T^n) \to \R $ as
	\[
	B[w_1,w_2,v] := \int_{\T^n} \frac{\nabla w_1 \cdot \nabla w_2}{\sqrt{1+|\nabla v|^2}}.
	\]
	By the  H\"{o}lder inequality, we obtain
	\begin{align}
	|B[w_1,w_2,v] | &\leq \int_{\T^n} |\nabla w_1| |\nabla w_2| \nonumber \\
	&\leq \| \nabla  w_1 \|_{L^2(\T^n)} \| \nabla w_2 \|_{L^2(\T^n)} \nonumber \\
	&\leq \| w_1 \|_{W^{1,2}(\T^n)} \| w_2 \|_{W^{1,2}(\T^n)}. \label{t2e1}
	\end{align}
Using the Poincar\'{e} inequality, we have
\begin{align}
|B[w,w,v]| &\geq \frac{1}{\sqrt{1+\|v\|_{C^1(\T^n)}^2}} 
\|\nabla w\|_{L^2(\T^n)}^2 \nonumber\\
&\geq \frac{1}{\sqrt{1+\|v\|_{C^1(\T^n)}^2}} 
\|\nabla w\|_{W^{1,2}(\T^n)}^2. \label{t2e2}
\end{align}
By \eqref{t2e1}, \eqref{t2e2}, and the Lax--Milgram theorem,
for any $H \in L^2_{ave}(\T^n)$,
there exists a unique function $u \in W^{1,2}_{ave}(\T^n)$ such that
\begin{align}
\int_{\T^n} \frac{\nabla u \cdot \nabla \psi}{\sqrt{1+|\nabla v|^2}}
= \int_{\T^n} H \psi \label{t2e3}
\end{align}
for all $\psi \in W^{1,2}_{ave}(\T^n)$.
For any $\phi \in W^{1,2}(\T^n) $,
we define $c_\phi:= \int_{\T^n} \phi$ and $\tilde{\phi}:=\phi -c_\phi \in W^{1,2}_{ave}(\T^n)$.
By \eqref{t2e3} and $H \in L^2_{ave}(\T^n)$, we obtain
\begin{align}
\int_{\T^n} \frac{\nabla u \cdot \nabla \phi}{\sqrt{1+|\nabla v|^2}}
&=\int_{\T^n} \frac{\nabla u \cdot \nabla \tilde{\phi}}{\sqrt{1+|\nabla v|^2}} \nonumber\\
&= \int_{\T^n} H \tilde{\phi}\nonumber\\
&= \int_{\T^n}H \phi.
\end{align}
Thus, Theorem \ref{theo1} follows.
\end{proof}
We define a mollifier as follows.
\begin{eqnarray}
\eta(x) :=
\begin{cases}
C \exp\left(\frac{1}{|x|^2-1} \right) & \mbox{ for }  |x|<1 \\
0 & \mbox{ for }  |x| \geq 1,
\end{cases}
\nonumber
\end{eqnarray}
where the constant $C>0$ is selected such that $\int_{\R^{n+1}} \eta=1$. We define $\eta_\lambda(x) :=\frac{1}{\lambda^n}\eta(\frac{x}{\lambda})$.
For any $f \in L^2(\T^n \times (-1,1))$ and $x^{n+1} \in (-1+\lambda,1-\lambda)$,
\begin{align*}
f_\lambda (x,x^{n+1}):
&= \int_{\T^n \times (-1,1)}  \eta_\lambda(x-y,x^{n+1}-y^{n+1}), 
f(y,y^{n+1})\ dy\\
&= \int_{B^{n+1}(0,\lambda)} \eta_\lambda(y,y^{n+1}) f(x-y,x^{n+1}-y^{n+1})\ dy,
\end{align*}
where $B^{n+1}(x,\lambda)$ is an open ball with center $x$ and radius $\lambda$ in $\T^n \times \R$.
Moreover, for any $g \in  W^{1,p}(\T^n \times (-1,1);\R^{n+1})$, we define
$g_\lambda := (g^1_\lambda,\ldots,g^n_\lambda,g^{n+1}_\lambda)=(g'_\lambda,g^{n+1}_\lambda).$

\begin{lemm} \label{lemma2}
Fix $\beta_{\be \label{be3}}>0$ and $0<\lambda<1 $. Suppose $v \in C^{1}(\T^n)$ satisfies $\|v\|_{C^{1}(\T^n)}< \beta_{\ref{be3}}$ and $g \in W^{1,p}(\T^n \times (-1,1);\R^{n+1})$
satisfies $\partial_{n+1} g^{n+1}(x,x^{n+1})>\beta_{\ref{be3}}| \partial_{n+1} g'(x,x^{n+1}) |$.
For any positive constant $c_{\cc \label{t3c1}}>0$, if $v(\T^n)+c_{\ref{t3c1}}\subset (-1+\lambda,1-\lambda)$,
	\begin{align}
	\int_{\T^n} \nu (\nabla v) \cdot g_\lambda  (x , v) <
	\int_{\T^n} \nu (\nabla v) \cdot g_\lambda  (x , v+c_{\ref{t3c1}}).
	\end{align}
\end{lemm}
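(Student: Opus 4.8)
The plan is to connect the two sides by the one–parameter family
\[
\Phi(t):=\int_{\T^n}\nu(\nabla v(x))\cdot g_\lambda\big(x,v(x)+t\big)\,dx,\qquad t\in[0,c_{\ref{t3c1}}],
\]
and to show that $\Phi$ is strictly increasing, so that in particular $\Phi(0)<\Phi(c_{\ref{t3c1}})$, which is exactly the assertion. For every $x\in\T^n$ and $t\in[0,c_{\ref{t3c1}}]$ the point $(x,v(x)+t)$ lies on the segment joining $(x,v(x))$ and $(x,v(x)+c_{\ref{t3c1}})$, and this segment is contained in $\T^n\times(-1+\lambda,1-\lambda)$ (as is tacitly required for both integrals in the conclusion to make sense); hence $g_\lambda$ is defined and $C^\infty$ there, and since $v\in C^1(\T^n)$ the function $\Phi$ is $C^1$ on $[0,c_{\ref{t3c1}}]$ with $\Phi'(t)=\int_{\T^n}\nu(\nabla v(x))\cdot\partial_{n+1}g_\lambda(x,v(x)+t)\,dx$ (differentiation under the integral sign is justified by continuity of $\partial_{n+1}g_\lambda$ and compactness of $\T^n$).

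First I would check that mollification preserves the monotonicity hypothesis. Because $\eta_\lambda\geq0$, convolution commutes with $\partial_{n+1}$, so $\partial_{n+1}g_\lambda=(\partial_{n+1}g)_\lambda$ on $\T^n\times(-1+\lambda,1-\lambda)$. Applying the a.e.\ bound $\partial_{n+1}g^{n+1}>\beta_{\ref{be3}}|\partial_{n+1}g'|$ inside the averaging integral against the nonnegative kernel $\eta_\lambda$ and using the triangle inequality $\big|\int\eta_\lambda\,\partial_{n+1}g'\big|\leq\int\eta_\lambda\,|\partial_{n+1}g'|$ yields
\[
\partial_{n+1}g_\lambda^{n+1}(x,x^{n+1})>\beta_{\ref{be3}}\,\big|\partial_{n+1}g_\lambda'(x,x^{n+1})\big|
\]
for every $(x,x^{n+1})\in\T^n\times(-1+\lambda,1-\lambda)$; the inequality stays strict since $\eta_\lambda>0$ on a set of positive measure.

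Next I would expand, using $\nu(z)=\frac{1}{\sqrt{1+|z|^2}}(-z,1)$,
\begin{align*}
\Phi'(t)&=\int_{\T^n}\nu(\nabla v)\cdot\partial_{n+1}g_\lambda(x,v+t)\,dx\\
&=\int_{\T^n}\frac{-\nabla v\cdot\partial_{n+1}g_\lambda'(x,v+t)+\partial_{n+1}g_\lambda^{n+1}(x,v+t)}{\sqrt{1+|\nabla v|^2}}\,dx.
\end{align*}
At each $x$ one has $|\nabla v(x)|\leq\|v\|_{C^1(\T^n)}<\beta_{\ref{be3}}$, hence $|\nabla v\cdot\partial_{n+1}g_\lambda'|\leq\beta_{\ref{be3}}|\partial_{n+1}g_\lambda'|<\partial_{n+1}g_\lambda^{n+1}$ by the previous step, so the numerator is strictly positive everywhere. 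The integrand is then continuous and strictly positive on the compact set $\T^n$, whence $\Phi'(t)>0$ for all $t\in[0,c_{\ref{t3c1}}]$. Therefore $\Phi$ is strictly increasing, and $\Phi(0)<\Phi(c_{\ref{t3c1}})$, proving the lemma.

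The only step with any real content is the first one: verifying that convolution with $\eta_\lambda$ carries the a.e.\ strict inequality between $\partial_{n+1}g^{n+1}$ and $\beta_{\ref{be3}}|\partial_{n+1}g'|$ to a genuinely strict pointwise inequality between the mollified quantities. Everything else — differentiating $\Phi$ under the integral and chasing the sign through the Cauchy–Schwarz bound $|\nabla v\cdot\partial_{n+1}g_\lambda'|\leq\|v\|_{C^1}|\partial_{n+1}g_\lambda'|$ — is routine given $g\in W^{1,p}$ and the smoothness of $\eta_\lambda$.
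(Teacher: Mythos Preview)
Your argument is correct and is essentially the same as the paper's: the paper writes the difference $\int_{\T^n}\nu(\nabla v)\cdot(g_\lambda(x,v+c_{\ref{t3c1}})-g_\lambda(x,v))$ directly as the double integral $\int_{\T^n}\frac{1}{\sqrt{1+|\nabla v|^2}}\int_v^{v+c_{\ref{t3c1}}}(-\nabla v\cdot\partial_{n+1}g'_\lambda+\partial_{n+1}g^{n+1}_\lambda)\,dt$, bounds $-\nabla v\cdot\partial_{n+1}g'_\lambda\ge-\beta_{\ref{be3}}|\partial_{n+1}g'_\lambda|$, and then pushes the absolute value inside the mollifier exactly as you do to invoke the pointwise hypothesis on $g$. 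Your function $\Phi$ and the verification that $\Phi'>0$ are just a repackaging of the same chain of inequalities.
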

\begin{proof} From the assumptions, we compute
	\begin{align}
	&\int_{\T^n} \nu (\nabla v) \cdot (g_\lambda  (x , v+c_{\ref{t3c1}}) -g_\lambda  (x , v) ) \nonumber \\
	= &\int_{\T^n} \frac{1}{\sqrt{1+|\nabla v|^2}}
	\int^{v+c_{\ref{t3c1}}}_v -\nabla v \cdot \partial_{n+1} g'_\lambda  (x,t)
	+\partial_{n+1} g^{n+1}_\lambda  (x,t) \ dt \nonumber \\
	\geq &\int_{\T^n} \frac{1}{\sqrt{1+|\nabla v|^2}}
	\int^{v+c_{\ref{t3c1}}}_v -\beta_{\ref{be3}} |\partial_{n+1} g'_\lambda (x,t)|
	+\partial_{n+1} g^{n+1}_\lambda  (x,t) \ dt \nonumber \\
	\geq &\int_{\T^n} \frac{1}{\sqrt{1+|\nabla v|^2}}
	\int^{v+c_{\ref{t3c1}}}_v \int_{\T^n \times (-1,1)} \eta_\lambda (x-y,t-y_{n+1}) \times
	\nonumber \\
	&\{-\beta_{\ref{be3}} |\partial_{n+1} g' (y,y_{n+1})|
	+\partial_{n+1} g^{n+1} (y,y_{n+1})\} \ dt \nonumber \\
	>& 0.
	\end{align}
	Lemma \ref{lemma2} follows.
\end{proof}
\begin{lemm} \label{lm3}
		Suppose $g \in W^{1, p}(\T^n \times (-1,1)$
		and $v \in C^1(\T^n)$ with $\|v \|_{C^1(\T^n)} \leq\frac{7}{16}$.
		Let $q=\frac{np}{n+1-p}$. Then, there exists a constant $c_{\cc \label{t4c1}}=c_{\ref{t4c1}}(n,p)>0$ such that,
		 if $\lambda<\frac18$,
		\begin{align}
		\|g_\lambda(\cdot,v(\cdot))\|_{L^{q}(\T^n)} \leq c_{\ref{t4c1}} 
		\|g\|_{W^{1,p}(\T^n \times  (-1,1))}.
		\end{align}
\end{lemm}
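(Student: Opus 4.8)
The plan is to reduce the claimed bound to two standard facts: (i) that mollification is bounded on $W^{1,p}$ uniformly in $\lambda$, and (ii) the Sobolev trace/restriction estimate that controls the $L^q$ norm of a $W^{1,p}$ function of $n+1$ variables restricted to an $n$-dimensional graph, where $q=\frac{np}{n+1-p}$ is exactly the trace exponent. First I would note that since $\|v\|_{C^1(\T^n)}\le \frac{7}{16}$ and $\lambda<\frac18$, the graph $\{(x,v(x)):x\in\T^n\}$ is contained in the slab $\T^n\times(-\frac12,\frac12)$, hence $g_\lambda(x,v(x))$ is well-defined (the mollification makes sense for $x^{n+1}\in(-1+\lambda,1-\lambda)\supset(-\frac12,\frac12)$), and $g_\lambda\in C^\infty$ there with $\|g_\lambda\|_{W^{1,p}(\T^n\times(-\frac12,\frac12))}\le C\|g\|_{W^{1,p}(\T^n\times(-1,1))}$ by Young's inequality for convolutions, with $C$ independent of $\lambda$.

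Next I would introduce the bi-Lipschitz change of variables $\Phi:\T^n\times(-1,1)\to\T^n\times(-1,1)$ given by $\Phi(x,s)=(x, s+v(x))$ — or rather a smooth truncation of it that equals this near the graph and maps the slab into itself — whose Jacobian is $1$ and whose Lipschitz constants are bounded in terms of $\|v\|_{C^1}\le\frac{7}{16}$. Setting $\tilde g:=g_\lambda\circ\Phi$, we have $\|\tilde g\|_{W^{1,p}}\le C(n)\|g_\lambda\|_{W^{1,p}}$, and $g_\lambda(x,v(x))=\tilde g(x,0)$. Thus the problem is reduced to estimating the trace of the $W^{1,p}(\T^n\times(-1,1))$ function $\tilde g$ on the flat hyperplane $\{x^{n+1}=0\}$.

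Finally I would invoke the trace theorem: for $\frac{n+1}{2}<p<n+1$ the restriction of $W^{1,p}$ of $n+1$ variables to a hyperplane lies in $W^{1-1/p,p}\hookrightarrow L^{q}$ of the $n$ remaining variables with $q=\frac{(n+1-1)p}{(n+1)-p}\cdot\frac{?}{?}$ — more precisely one checks the Sobolev embedding on the hyperplane gives exactly $q=\frac{np}{n+1-p}$ — so $\|\tilde g(\cdot,0)\|_{L^q(\T^n)}\le C(n,p)\|\tilde g\|_{W^{1,p}(\T^n\times(-1,1))}$. Chaining the three estimates yields $\|g_\lambda(\cdot,v(\cdot))\|_{L^q(\T^n)}\le c(n,p)\|g\|_{W^{1,p}(\T^n\times(-1,1))}$ with $c$ independent of $\lambda$ and $v$ (as long as $\|v\|_{C^1}\le\frac7{16}$), which is the claim. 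The main obstacle is bookkeeping: one must make the change of variables $\Phi$ genuinely map the fixed slab $\T^n\times(-1,1)$ into itself (this is where $\lambda<\frac18$ and $\|v\|_{C^1}\le\frac7{16}$ are used, to keep everything inside $(-\frac12,\frac12)$ with room to spare), and one must verify that the constants in the trace and convolution estimates depend only on $n$ and $p$ and not on $\lambda$ or $v$; the exponent arithmetic identifying $\frac{np}{n+1-p}$ as the correct trace-Sobolev exponent is the only genuinely computational point.
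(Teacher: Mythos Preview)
Your proposal is correct and follows essentially the same two-step skeleton as the paper: bound $\|g_\lambda\|_{W^{1,p}}$ on a slightly smaller slab uniformly in $\lambda$, then apply a trace estimate for $W^{1,p}$ functions restricted to the graph of $v$, with $q=\frac{np}{n+1-p}$ the correct trace exponent. The only difference is cosmetic: the paper quotes the graph-trace inequality directly from \cite[Lemma~2.3]{T20} and then proves the mollification bound by a H\"older/Jensen computation, whereas you spell out the trace step explicitly via the bi-Lipschitz flattening $\Phi(x,s)=(x,s+v(x))$ followed by the standard flat trace theorem, and invoke Young's inequality for the convolution bound. Your version is a bit more self-contained; the paper's is shorter because it outsources the trace step.
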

\begin{proof}
	By the same proof as in \cite[Lemma 2.3]{T20}, we obtain
	\begin{align}
	\|g_\lambda(\cdot,v(\cdot))\|_{L^{q}(\T^n)} \leq c_{\ref{lc0}} 
	\|g_\lambda\|_{W^{1,p}(\T^n \times  (-\frac78,\frac78))}, \label{lt1}
	\end{align}
	where $c_{\cc \label{lc0}} =c_{\ref{lc0}}(n,p)>0$. 
	Using the  H\"{o}lder inequality, we obtain
\begin{align}
	&\int_{\T^n\times (-\frac78,\frac78)} |g_\lambda|^p  \ dx\nonumber \\ 
	\leq&\int_{\T^n\times (-\frac78,\frac78)} \left(\int_{B^{n+1}(x, \lambda)}
	\eta_\lambda^{1-\frac1p+\frac1p}(x-y,x^{n+1}-y^{n+1})
	|g(y,y^{n+1})| \ dy \right)^p \ dx \nonumber \\
	\leq & \int_{\T^n\times (-\frac78,\frac78)} \left(\int_{B^{n+1}(x, \lambda)}
	\eta_\lambda(x-y,x^{n+1}-y^{n+1})
	|g(y,y^{n+1})|^p \ dy \right) \ dx \nonumber \\
	\leq& \int_{\T^n \times (-1,1)} |g(y,y^{n+1})|^p
	\left(\int_{B^{n+1}(y, \lambda)}\eta_\lambda(x-y,x^{n+1}-y^{n+1}) \ dx \right) \ dy
	\nonumber \\
	=& \int_{\T^n \times (-1,1)} |g(y,y^{n+1})|^p \ dy. \label{lt2}
	\end{align}
	We can show that $\|\nabla g_\lambda\|_{L^p(\T^n \times  (-\frac78,\frac78))} \leq 
	\|\nabla g\|_{L^p(\T^n \times  (-1,1))}$ in the exact same manner, and
	Lemma \ref{lm3} follows by \eqref{lt1} and \eqref{lt2}.
\end{proof}
	\begin{theo} \label{T4}
		Suppose $v \in C^{1}(\T^n)$ and $g \in W^{1,p}(\T^n \times (-1,1);\R^{n+1})$.
		Then, there exist constants $\e_{\ce \label{ce3}}=\e_{\ref{ce3}}(n,p)>0$, if $\lambda <\frac18 $, $\e<\e_{\ref{ce3}}$,
		$ \|v\|_{C^1(\T^n) }\leq\e^\frac12$, and
		$g$ satisfies \eqref{t1e1}--\eqref{t1e3}. Then,
		there exist a unique function $u \in W^{1,2}_{ave}(\T^n)$ and
		a unique constant $-\frac14<c_v <\frac14$ such that
		\begin{align}
		\int_{\T^n} \frac{\nabla u \cdot \nabla \phi}{\sqrt{1+|\nabla v|^2}}
		= \int_{\T^n}  \nu(\nabla v) \cdot g_\lambda(x, v+c_v) \phi \label{t4e0}
		\end{align}
		for all $\phi \in W^{1,2}(\T^n)$.
	\end{theo}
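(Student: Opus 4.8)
The plan is to reduce \eqref{t4e0} to finding a single real parameter $c_v$, and to produce $c_v$ by the intermediate value theorem. For a constant $c$ with $v(\T^n)+c\subset(-1+\lambda,1-\lambda)$ the function $\nu(\nabla v)\cdot g_\lambda(\cdot,v(\cdot)+c)$ is continuous on $\T^n$ --- it is the pointwise product of the bounded continuous map $x\mapsto\nu(\nabla v(x))$ with the restriction of the smooth function $g_\lambda$ to the graph of $v+c$ --- hence it belongs to $L^2(\T^n)$. Applying Theorem \ref{theo1} with $H=\nu(\nabla v)\cdot g_\lambda(\cdot,v(\cdot)+c)$, equation \eqref{t4e0} has a unique solution $u\in W^{1,2}_{ave}(\T^n)$ \emph{if and only if} this $H$ has zero mean, i.e. if and only if
\[
\Phi(c):=\int_{\T^n}\nu(\nabla v(x))\cdot g_\lambda(x,v(x)+c)\,dx=0 .
\]
Because $\|v\|_{C^1(\T^n)}\le\e^{1/2}$, for $\e_{\ref{ce3}}$ small we have $v(\T^n)+c\subset(-\tfrac78,\tfrac78)\subset(-1+\lambda,1-\lambda)$ for every $c\in[-\tfrac14,\tfrac14]$, so $\Phi$ is well defined and (as $g_\lambda$ is smooth) of class $C^1$ on $[-\tfrac14,\tfrac14]$. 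It thus suffices to show that $\Phi$ has a unique zero in the open interval $(-\tfrac14,\tfrac14)$.

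Strict monotonicity of $\Phi$ --- which gives uniqueness of $c_v$, hence of the pair $(u,c_v)$ --- is the quantitative form of Lemma \ref{lemma2} and comes from \eqref{t1e4}. Differentiating under the integral and using $|\nabla v|\le\e^{1/2}$, Jensen's inequality for the convolution ($|\partial_{n+1}g'_\lambda|\le(|\partial_{n+1}g'|)_\lambda$), and $\partial_{n+1}g^{n+1}-\e^{1/2}|\partial_{n+1}g'|>\e$,
\[
\Phi'(c)=\int_{\T^n}\frac{-\nabla v\cdot\partial_{n+1}g'_\lambda(x,v+c)+\partial_{n+1}g^{n+1}_\lambda(x,v+c)}{\sqrt{1+|\nabla v|^2}}\,dx\ \ge\ \int_{\T^n}\frac{\bigl(\partial_{n+1}g^{n+1}-\e^{1/2}|\partial_{n+1}g'|\bigr)_\lambda(x,v+c)}{\sqrt{1+|\nabla v|^2}}\,dx\ \ge\ \frac{\e}{\sqrt{1+\e}}>0 .
\]
Hence $\Phi$ is strictly increasing on $[-\tfrac14,\tfrac14]$ and has at most one zero there.

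For the sign change $\Phi(-\tfrac14)<0<\Phi(\tfrac14)$ I would transport the second argument $v(x)\pm\tfrac14$ back to $0$ using the clean pointwise bound $\partial_{n+1}g^{n+1}_\lambda=(\partial_{n+1}g^{n+1})_\lambda\ge\e$. For $\e_{\ref{ce3}}$ small, $v(x)+\tfrac14>0$, so
\[
g^{n+1}_\lambda(x,v(x)+\tfrac14)=g^{n+1}_\lambda(x,0)+\int_0^{v(x)+1/4}\partial_{n+1}g^{n+1}_\lambda(x,t)\,dt\ \ge\ g^{n+1}_\lambda(x,0)+\e\bigl(\tfrac14-\e^{1/2}\bigr);
\]
writing $(1+|\nabla v|^2)^{-1/2}=1-r$ with $0\le r\le\tfrac\e2$ and integrating over $\T^n$,
\[
\Phi(\tfrac14)\ \ge\ \e\bigl(\tfrac14-\e^{1/2}\bigr)+\int_{\T^n}g^{n+1}_\lambda(x,0)\,dx-\e^{1/2}\bigl\|g'_\lambda(\cdot,v+\tfrac14)\bigr\|_{L^1(\T^n)}-\tfrac\e2\bigl\|g^{n+1}_\lambda(\cdot,v+\tfrac14)\bigr\|_{L^1(\T^n)} .
\]
Lemma \ref{lm3} applied to $v+\tfrac14$ (whose $C^1$-norm is $\le\tfrac7{16}$ for $\e_{\ref{ce3}}$ small) together with \eqref{t1e1} bounds the last two norms by $c_{\ref{t4c1}}\|g\|_{W^{1,p}}<c_{\ref{t4c1}}\e^{2/3}$, so they contribute only $O(\e^{7/6})$. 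For the constant-height term, Fubini's theorem and translation invariance of Lebesgue measure on $\T^n$ give $\int_{\T^n}g^{n+1}_\lambda(x,0)\,dx=\int h_\lambda(t)\,G(t)\,dt$, where $G(s):=\int_{\T^n}g^{n+1}(y,s)\,dy$ and $h_\lambda(t):=\int_{\R^n}\eta_\lambda(z,t)\,dz$ is an even weight supported in $(-\lambda,\lambda)$ with $\int h_\lambda=1$; since $G(0)=0$ by \eqref{t1e3} and $G\in W^{1,p}((-1,1))\hookrightarrow C^{0,1-1/p}$ with $\|G\|_{C^{0,1-1/p}}\le C(n,p)\|g\|_{W^{1,p}}$, we get $\bigl|\int_{\T^n}g^{n+1}_\lambda(x,0)\,dx\bigr|\le\sup_{|t|\le\lambda}|G(t)|\le C(n,p)\,\lambda^{1-1/p}\e^{2/3}$. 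Hence $\Phi(\tfrac14)\ge\e(\tfrac14-\e^{1/2})-C(n,p)\bigl(\lambda^{1-1/p}\e^{2/3}+\e^{7/6}\bigr)$, and the symmetric computation gives $\Phi(-\tfrac14)\le-\e(\tfrac14-\e^{1/2})+C(n,p)\bigl(\lambda^{1-1/p}\e^{2/3}+\e^{7/6}\bigr)$.

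Once $\e$ is small (and $\lambda$ is small relative to $\e$), these two bounds are strictly negative and strictly positive respectively, so the intermediate value theorem together with the strict monotonicity of $\Phi$ yields a unique $c_v\in(-\tfrac14,\tfrac14)$ with $\Phi(c_v)=0$; Theorem \ref{theo1} then supplies the corresponding unique $u\in W^{1,2}_{ave}(\T^n)$, which completes the proof. The step I expect to be the main obstacle is exactly this final estimate: the quantity $\int_{\T^n}g^{n+1}_\lambda(x,0)\,dx$ is the defect that mollification introduces into the balance condition \eqref{t1e3}, it is only of size $\lambda^{1-1/p}\e^{2/3}$, and it is not automatically dominated by the monotonicity gain $\e(\tfrac14-\e^{1/2})$, so the scale $\lambda$ must be taken small enough in terms of $\e$ (which is harmless, $\lambda$ being a parameter that is eventually sent to $0$ in the proof of Theorem \ref{MT}); checking, via Lemma \ref{lm3} and the $L^p$ bound for convolution used in its proof, that the remaining mollification and Sobolev error terms are genuinely lower order is the rest of the technical work, everything else being bookkeeping around Theorem \ref{theo1} and Lemma \ref{lemma2}.
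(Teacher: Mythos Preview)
Your overall architecture is exactly the paper's: define $\Phi(c)=\int_{\T^n}\nu(\nabla v)\cdot g_\lambda(x,v+c)$, use the monotonicity of Lemma~\ref{lemma2} (your quantitative $\Phi'>0$), establish the sign change $\Phi(-\tfrac14)<0<\Phi(\tfrac14)$, apply the intermediate value theorem, and then invoke Theorem~\ref{theo1}. The only substantive divergence is in how you obtain the sign of $\Phi(\pm\tfrac14)$, and that is also where the gap lies.

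You transport the second argument of the \emph{mollified} function $g_\lambda^{n+1}$ down to height $0$ and are then forced to control the mollification defect $\int_{\T^n}g_\lambda^{n+1}(x,0)\,dx$, which is not zero; your H\"older estimate gives it size $O(\lambda^{1-1/p}\e^{2/3})$, and you correctly note that this beats the monotonicity gain $\sim\e/4$ only if $\lambda$ is taken small depending on $\e$. That yields a weaker statement than Theorem~\ref{T4}, which asserts the conclusion for \emph{all} $\lambda<\tfrac18$ and $\e<\e_{\ref{ce3}}(n,p)$ independently. Your remark that the coupling is harmless for the eventual limit $\lambda\to0$ in Theorem~\ref{MT} is reasonable, but it does not prove the theorem as stated, and the uniformity in $\lambda$ is used as written in Theorems~\ref{T5} and~\ref{t7}.

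The paper avoids the coupling by never evaluating $g_\lambda^{n+1}$ at height $0$. Instead it opens the convolution and applies the pointwise monotonicity $\partial_{n+1}g^{n+1}>\e$ to the \emph{unmollified} integrand: since $v(x)+\tfrac14-y^{n+1}>\tfrac14-\e^{1/2}-\lambda>\tfrac1{16}$ for $\e<16^{-2}$ and $\lambda<\tfrac18$, one has
\[
g^{n+1}\!\bigl(x-y,\,v(x)+\tfrac14-y^{n+1}\bigr)\;>\;g^{n+1}\!\bigl(x-y,\,0\bigr)+\tfrac{\e}{16},
\]
and after integrating $\eta_\lambda(y,y^{n+1})$ and then $x$ over $\T^n$, the term $\int_{\T^n}g^{n+1}(x-y,0)\,dx$ vanishes \emph{exactly} by \eqref{t1e3} and translation invariance. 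This gives $\int_{\T^n}g_\lambda^{n+1}(x,v+\tfrac14)>c(n)\,\e$ with no $\lambda$-dependent remainder; combined with your own splitting $\nu(\nabla v)=\nu(0)+(\nu(\nabla v)-\nu(0))$ and the Lemma~\ref{lm3} bound $O(\e^{1/2}\cdot\e^{2/3})=O(\e^{7/6})$ on the tangential piece, one obtains $\Phi(\tfrac14)>0$ for $\e$ below a threshold depending only on $(n,p)$.
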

\begin{proof}
	We define
	\begin{align}
	F(t):= \int_{\T^n}  \nu(\nabla v) \cdot g_\lambda(x, v+t). 
	\end{align}
The function $F$ is continuous. Suppose that $\e <\frac{1}{16^2}$. We will consider that
 the domain of $F$ is $[-\frac14,\frac14]$.
By the mean value theorem,
there exists a constant $c_{\cc \label{t4c2}}=c_{\ref{t4c2}}(n,p)>0$ such that
	\begin{align}
F \left( \frac14 \right)&= \int_{\T^n} \left( \nu(\nabla v)-\nu(0)+\nu(0)\right) \cdot g_\lambda\left(x, v+\frac14\right)  \nonumber \\
&\geq -c_{\ref{t4c2}} \|v\|_{C^1(\T^n)} \left\|g_\lambda\left(\cdot,v(\cdot)+\frac14\right)\right\|_{L^{q}(\T^n)}
+\int_{\T^n}g_\lambda^{n+1}\left(x, v+\frac14\right). \label{t4e1}
\end{align}
By Lemma \ref{lm3} and $\|v+\frac14\|_{C^1(\T^n)}\leq \frac{5}{16}$, we obtain
\begin{align}
\left\|g_\lambda\left(\cdot,v(\cdot)+\frac14\right)\right\|_{L^{q}(\T^n)}
\leq c_{\ref{t4c1}} \|g\|_{W^{1,p}\left(\T^n \times  \left(-1,1\right)\right)}. \label{t4e3}
\end{align}
By \eqref{t1e4} and \eqref{t1e3},
there exists a constant $c_{\cc \label{t4c3}}=c_{\ref{t4c3}}(n)>0$  such that
\begin{align}
&\int_{\T^n}g_\lambda^{n+1}\left(x, v+\frac14\right) \nonumber \\
=&\int_{\T^n}\int_{B^{n+1}(0,\lambda)} \eta_\lambda(y,y^{n+1}) g^{n+1}\left(x-y,v+\frac14-y^{n+1}\right) \ dydx \nonumber\\
>&\int_{\T^n}\int_{B^{n+1}(0,\lambda)} \eta_\lambda(y,y^{n+1}) g^{n+1}\left(x-y, \frac{1}{16}\right)\ dy dx \nonumber\\
> &\int_{\T^n}\int_{B^{n+1}(0,\lambda)} \eta_\lambda(y,y^{n+1})
 \left(g^{n+1}\left(x-y, 0\right)+\frac{\e}{16}\right)\ dydx \nonumber\\
 >& \frac{c_{\ref{t4c3}}}{16}\e. \label{t4e2}
\end{align}
By \eqref{t1e1}, \eqref{t4e1}--\eqref{t4e2}, and $ \|v\|_{C^1(\T^n) }< \e^\frac12$,
if $\e<\left(\frac{c_{\ref{t4c3}}}{16c_{\ref{t4c1}}c_{\ref{t4c2}}} \right)^6=:\e_{\ref{ce3}}(n,p)$,
 \begin{align}
 F \left( \frac14 \right)
 &>- c_{\ref{t4c1}} c_{\ref{t4c2}} \|v\|_{C^1(\T^n)} \|g_\lambda\|_{W^{1,p}(\T^n \times  (-1,1))}
 +\frac{c_{\ref{t4c3}}}{16}\e \nonumber\\
 &> -c_{\ref{t4c1}}c_{\ref{t4c2}} \e^\frac76 +\frac{c_{\ref{t4c3}}}{16}\e \nonumber\\
 &> \e\left(-c_{\ref{t4c1}}c_{\ref{t4c2}} \e^\frac16+\frac{c_{\ref{t4c3}}}{16}\right) \nonumber \\
 &>0.
 \end{align}
Similarly, we can show that
$F \left( -\frac14 \right)<0$.
By Lemma \ref{lemma2} and the mean value theorem,
there exists a unique constant $-\frac14<c_v <\frac14$ that satisfies $F(c_v)=0$.
Using Theorem \ref{theo1}, Theorem \ref{T4} follows.
\end{proof}
Let us define an operator $T: \mathcal{A}(s)\to W^{1,2}_{ave}(\T^n)\times \left[-\frac14,\frac14\right]$ by $T(v)=(T_1(v),T_2(v)):=(u,c_v)$ that satisfies
\eqref{t4e0}, where $\mathcal{A}(s)  := \{w \in W^{2,q}_{ave}(\T^n):\|w\|_{W^{2,q}(\T^n)} \leq s\}$.
\begin{theo} \label{T5}
	There exist constants $\e_{\ce \label{ce5}}=\e_{\ref{ce5}}(n,p)>0$ and
	$c_{\cc \label{t5c1}}=c_{\ref{t5c1}}(n,p) >0$,
	if $\lambda < \frac18$, $\e<\min\left\{\e_{\ref{ce3}},\e_{\ref{ce5}} \right\}$, $v \in \mathcal{A}(\e^\frac12)$, 
	and $g \in W^{1,p}(\T^n \times (-1,1);\R^{n+1})$ satisfies \eqref{t1e1}--\eqref{t1e3}.
	Then, 
	\begin{align}
	\|T_1(v)\|_{W^{2,q}(\T^n)} \leq c_{\ref{t5c1}}\|g\|_{W^{1,p}\left(\T^n \times  \left(-1,1\right)\right)}. \label{t5e1}
	\end{align}
\end{theo}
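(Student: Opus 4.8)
The plan is to read \eqref{t4e0} as a linear, uniformly elliptic equation in divergence form whose coefficient is a small $W^{1,q}$-perturbation of the constant $1$, to upgrade its weak $W^{1,2}$ solution to $W^{2,q}$ by a finite elliptic bootstrap, and then to bound the $W^{2,q}$ norm by absorbing the perturbation term. Concretely, since $q>n$ the Sobolev embedding $W^{2,q}(\T^n)\hookrightarrow C^1(\T^n)$ shows that $v\in\mathcal A(\e^{\frac12})$ controls $\|v\|_{C^1(\T^n)}$ by a multiple of $\e^{\frac12}$, so for $\e<\e_{\ref{ce3}}$ Theorem \ref{T4} applies and produces the unique $u=T_1(v)\in W^{1,2}_{ave}(\T^n)$ and the constant $|c_v|<\tfrac14$ satisfying \eqref{t4e0}. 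Set $H:=\nu(\nabla v)\cdot g_\lambda(\cdot,v+c_v)$; by the choice of $c_v$ one has $H\in L^q_{ave}(\T^n)$. For $\e$ small, $\|v+c_v\|_{C^1(\T^n)}\le\|v\|_{C^1(\T^n)}+\tfrac14\le\tfrac7{16}$, so Lemma \ref{lm3} (applicable as $\lambda<\tfrac18$) together with $|\nu(\nabla v)|\equiv1$ gives
\[
\|H\|_{L^q(\T^n)}\le\bigl\|g_\lambda(\cdot,v(\cdot)+c_v)\bigr\|_{L^q(\T^n)}\le c_{\ref{t4c1}}\,\|g\|_{W^{1,p}(\T^n\times(-1,1))};
\]
it therefore suffices to prove $u\in W^{2,q}(\T^n)$ with $\|u\|_{W^{2,q}(\T^n)}\le C\,\|H\|_{L^q(\T^n)}$ for some $C=C(n,p)$.

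Write $a:=(1+|\nabla v|^2)^{-1/2}\in W^{1,q}(\T^n)$. For $\e$ small, $\tfrac1{\sqrt2}\le a\le1$, and from $|\nabla a|\le|\nabla v|\,|D^2v|$ one gets $\|\nabla a\|_{L^q(\T^n)}\le\|v\|_{C^1(\T^n)}\|D^2v\|_{L^q(\T^n)}\le C(n,p)\,\e$. Equation \eqref{t4e0} reads $-\di(a\nabla u)=H$ weakly, which in the distributional sense is $\Delta u=-a^{-1}(H+\nabla a\cdot\nabla u)$. I would now bootstrap, starting from $u\in W^{1,2}_{ave}(\T^n)$: since $\nabla a\in L^q$ and $\nabla u\in L^2$, the right-hand side lies in $L^{r_1}$ with $\tfrac1{r_1}=\tfrac1q+\tfrac12$, so $u\in W^{2,r_1}_{ave}(\T^n)$, and thereafter I iterate the torus Calder\'{o}n--Zygmund estimate $\|w\|_{W^{2,r}(\T^n)}\le C(n,r)\|\Delta w\|_{L^r(\T^n)}$ for mean-zero $w$ together with the embedding $W^{2,r}\hookrightarrow W^{1,r^{*}}$. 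Because $q>n$, the reciprocal Lebesgue exponent of $\nabla u$ decreases by the fixed amount $\tfrac1n-\tfrac1q>0$ at each pass, so after a number of passes depending only on $n$ and $p$ one reaches $\nabla u\in L^{\infty}(\T^n)$; then $a^{-1}(H+\nabla a\cdot\nabla u)\in L^q(\T^n)$ and hence $u\in W^{2,q}_{ave}(\T^n)$. (When $n=1$ the equation is an ordinary differential equation and this step is immediate.)

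With $u\in W^{2,q}(\T^n)$ established, applying the torus Calder\'{o}n--Zygmund estimate at exponent $q$, then $\|\nabla u\|_{L^{\infty}(\T^n)}\le C(n,p)\|u\|_{W^{2,q}(\T^n)}$ and the bound on $\|\nabla a\|_{L^q}$, gives
\[
\|u\|_{W^{2,q}(\T^n)}\le C(n,p)\bigl\|a^{-1}(H+\nabla a\cdot\nabla u)\bigr\|_{L^q(\T^n)}\le C(n,p)\|H\|_{L^q(\T^n)}+C(n,p)\,\e\,\|u\|_{W^{2,q}(\T^n)}.
\]
Choosing $\e_{\ref{ce5}}=\e_{\ref{ce5}}(n,p)>0$ small enough that $C(n,p)\e_{\ref{ce5}}\le\tfrac12$ (and small enough for the reductions above to hold), one absorbs the last term and concludes $\|u\|_{W^{2,q}(\T^n)}\le2C(n,p)\|H\|_{L^q(\T^n)}\le c_{\ref{t5c1}}\,\|g\|_{W^{1,p}(\T^n\times(-1,1))}$ with $c_{\ref{t5c1}}=c_{\ref{t5c1}}(n,p)$, which is \eqref{t5e1}. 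The main obstacle is the bootstrap: one must go from the weak $W^{1,2}$ solution all the way up to $W^{2,q}$ while keeping every Calder\'{o}n--Zygmund and Sobolev constant dependent only on $n$ and $p$ (so that $c_{\ref{t5c1}}$ is), and while handling the product $\nabla a\cdot\nabla u$, whose integrability is initially strictly below $L^q$ and only reaches $L^q$ after finitely many iterations; the smallness of $\|\nabla a\|_{L^q(\T^n)}$ coming from $v\in\mathcal A(\e^{1/2})$ is precisely what makes the final absorption possible.
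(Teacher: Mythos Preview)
Your argument is correct and reaches the same conclusion, but the route differs from the paper's in two respects. First, the paper does not bootstrap $u$ directly: it begins by assuming $v\in C^\infty(\T^n)\cap\mathcal A(\e^{1/2})$, invokes classical interior regularity (\cite[Corollary 8.11]{G83}) to make $T_1(v)\in C^\infty$, derives the $W^{2,q}$ estimate for this smooth pair, and only then passes to general $v\in\mathcal A(\e^{1/2})$ by approximating $v$ by smooth $v_m$ in $C^1$, extracting a convergent subsequence of $T(v_m)$, and identifying the limit with $T(v)$ via uniqueness. Your finite elliptic bootstrap replaces this detour: you stay with the given $v$ and climb from $W^{1,2}$ to $W^{2,q}$ in a fixed (depending on $n,p$) number of Calder\'on--Zygmund steps, which is more self-contained but requires the bookkeeping you describe. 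Second, for the quantitative estimate the paper applies \cite[Theorem 9.11]{G83}, which produces a lower-order term $\|T_1(v)\|_{L^q}$ on the right-hand side; the paper then spends several lines bounding this term through the bilinear form $B[T_1(v),T_1(v),v]$ and the equation itself, splitting into the cases $q\le2$ and $q>2$. Your use of the mean-zero Calder\'on--Zygmund inequality $\|w\|_{W^{2,q}(\T^n)}\le C(n,q)\|\Delta w\|_{L^q(\T^n)}$ sidesteps this lower-order term entirely, so the absorption step is cleaner. Both approaches ultimately hinge on the same smallness $\|\nabla a\|_{L^q}\lesssim\e$ to absorb the cross-term $\nabla a\cdot\nabla u$, and both yield constants depending only on $n,p$.
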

\begin{proof}
	We first assume that $v \in C^\infty(\T^n) \cap \mathcal{A}(\e^\frac12)$.
	Using \cite[Corollary 8.11]{G83}, we obtain $T_1(v) \in C^\infty(\T^n)$; thus, we can
	rewrite \eqref{t4e0} as
	\begin{align}
	\frac{\Delta T_1(v)}{\sqrt{1+|\nabla v|^2}}+ \nabla T_1(v) \cdot\nabla
	 \left(\frac{1}{\sqrt{1+|\nabla v|^2}}\right) = -\nu(\nabla v)\cdot g_\lambda(x,v+T_2(v)).
	\end{align}
	Using \cite[Theorem 9.11]{G83},
	we find that there exists a constant 
	$c_{\cc \label{t5c3}}=c_{\ref{t5c3}}(n,p) >0$ such that
	\begin{align}
	\|T_1(v)\|_{W^{2,q}(\T^n)} \leq &c_{\ref{t5c3}}
	\Biggr( \|T_1(v)\|_{L^{q}(\T^n)}+ \|\nu(\nabla v)\cdot g_\lambda(x,v+T_2(v))\|_{L^{q}(\T^n)}
	 \nonumber \\&+ \left\|\nabla T_1(v) \cdot \nabla
	 \left(\frac{1}{\sqrt{1+|\nabla v|^2}}\right) \right\|_{L^{q}(\T^n)}\Biggr) . \label{t5e5}
	\end{align} 
Using Lemma \ref{lm3}, we obtain
\begin{align}
\|\nu(\nabla v)\cdot g_\lambda(x,v+T_2(v))\|_{L^{q}(\T^n)}
\leq c_{\ref{t4c1}} \|g\|_{W^{1,p}\left(\T^n \times  \left(-1,1\right)\right)}.
\end{align}
Using the Sobolev inequality,
we find that there exists a constant 
$c_{\cc \label{t5c2a}}=c_{\ref{t5c2a}}(n,p) >0$ such that
\begin{align}
	&\left\|\nabla T_1(v) \cdot \nabla
	\left(\frac{1}{\sqrt{1+|\nabla v|^2}}\right) \right\|_{L^{q}(\T^n)} \nonumber \\
	\leq&\|T_1(v)\|_{C^1(\T^n)}
	\left\|\nabla\left(\frac{1}{\sqrt{1+|\nabla v|^2}}\right) \right\|_{L^{q}(\T^n)} \nonumber \\
	\leq &c_{\ref{t5c2a}}\|T_1(v)\|_{W^{2,q}(\T^n)} \|v\|_{W^{2,q}(\T^n)}.
\end{align}
Next, we estimate the term $\|T_1(v)\|_{L^{q}(\T^n)}$. If $q \leq 2$, then, by \eqref{t2e2} and
Lemma \ref{lm3}, we obtain 
\begin{align}
\|T_1(v)\|_{L^{q}(\T^n)} &\leq c_{\cc }(n,p)\|T_1(v)\|_{L^{2}(\T^n)} \nonumber \\
&\leq c_{\cc }(n,p) B[T_1(v),T_1(v),v]^{\frac12}. \nonumber\\
& = c_\arabic{c} \left(
\int_{\T^n} \frac{\nabla T_1(v) \cdot \nabla T_1(v)}{\sqrt{1+|\nabla v|^2}} \right)^\frac12
 \nonumber \\
 & = c_\arabic{c} \left(
\int_{\T^n}  \nu(\nabla v) \cdot g_\lambda(x, v+T_2(v)) T_1(v) \right)^\frac12
 \nonumber\\
 &\leq c_{\cc }(n,p) \|g\|^\frac12_{W^{1,p}(\T^n)} \|T_1(v)\|^\frac12_{L^\infty(\T^n)} \nonumber \\
 &\leq c_{\cc}(n,p)\|g\|_{W^{1,p}(\T^n)}+\frac{1}{4c_{\ref{t5c3}}}  \|T_1(v)\|_{W^{2,q}(\T^n)}.
\label{t5e3}
\end{align}
If $q>2$, by \eqref{t5e3} and the Riesz--Thorin theorem, we obtain
\begin{align}
\|T_1(v)\|_{L^{q}(\T^n)} &\leq \|T_1(v)\|_{L^{2}(\T^n)}^\frac1q 
\|T_1(v)\|_{L^{2}(\T^n)}^{1-\frac1q }\nonumber \\
&\leq c_{\cc}(n,p) \|g\|_{W^{1,p}(\T^n)}^\frac{1}{2q}
\|T_1(v)\|_{L^\infty(\T^n)}^{\frac{1}{2q}+1-\frac1q}
 \nonumber\\
&\leq  c_{\cc}(n,p)\|g\|_{W^{1,p}(\T^n)}+\frac{1}{4c_{\ref{t5c3}}}  \|T_1(v)\|_{W^{2,q}(\T^n)}. \label{t5e4}
\end{align}
By \eqref{t5e5}--\eqref{t5e4}, there exists a constant $c_{\cc \label{t5c5}}=c_{\ref{t5c5}}(n,p)>0$
	such that
	\begin{align}
\|T_1(v)\|_{W^{2,q}(\T^n)} 
\leq&
c_{\ref{t5c5}}(\|g\|_{W^{1,p}\left(\T^n \times  \left(-1,1\right)\right)}+
\|T_1(v)\|_{W^{2,q}(\T^n)} \|v\|_{W^{2,q}(\T^n)})
\nonumber \\&+\frac{1}{4}  \|T_1(v)\|_{W^{2,q}(\T^n)}.
	\end{align}
	If $\e < \frac{1}{16c^2_{\ref{t5c5}}}$, we obtain
	\begin{align}
	\|T_1(v)\|_{W^{2,q}(\T^n)} \leq 2c_{\ref{t5c5}}\|g\|_{W^{1,p}\left(\T^n \times  \left(-1,1\right)\right)}. \label{t5e6}
	\end{align}
For the general case of $v \in W^{2,q}(\T^n)$,
suppose that $\{v_m \}_{m \in \Na} \in C^\infty(\T^n)$ converges to $v$ in the sense of $C^1(\T^n)$.
By \eqref{t5e6}, there exists a subsequence $\{v_{m_k}\}_{k \in \Na} \subset    \{v_m \}_{m \in \Na}$ 
such that $T_1(v_{m_k})$ converges to a function $w_\infty \in W^{2,q}(\T^n)$
in the sense of  $C^1(\T^n)$ and
$T_2(v_{m_k})$ converges to a constant $d_\infty \in \left[-\frac14,\frac14\right]$.
For any $\phi \in W^{1,2}(\T^n)$, we obtain
\begin{align}
&\int_{\T^n}  \nu(\nabla v) \cdot g_\lambda(x, v+d_\infty)  \phi -
\nu(\nabla v_{m_k}) \cdot g_\lambda(x, v_{m_k}+T_2(v_{m_k}))  \phi \nonumber \\
\leq&\int_{\T^n}|\phi||\nu(\nabla v) - \nu(\nabla v_{m_k})| 
|g_\lambda(x, v_{m_k}+T_2(v_{m_k}))|\nonumber \\&+ \int_{\T^n} |\phi|  \left|\int_{v_{m_k}+T_2(v_{m_k})}^{v+d_\infty}
\partial_{n+1} g_\lambda(x, s) \right|  \nonumber \\
\to& 0 \quad (k \to \infty)  \label{t5e7}
\end{align}
and
\begin{align}
&\int_{\T^n} \frac{\nabla w_\infty \cdot \nabla \phi}{\sqrt{1+|\nabla v|^2}}
-\frac{\nabla T_1(v_{m_k}) \cdot \nabla \phi}{\sqrt{1+|\nabla v_{m_k}|^2}} \nonumber\\
\leq&\int_{\T^n} \frac{(\nabla w_\infty-\nabla T_1(v_{m_k})) \cdot \nabla \phi}{\sqrt{1+|\nabla v|^2}}
\nonumber \\
&+ \int_{\T^n} (\nabla T_1(v_{m_k}) \cdot \nabla \phi)\left(
 \frac{1}{\sqrt{1+|\nabla v|^2}} -\frac{1}{\sqrt{1+|\nabla v_{m_k}|^2}}\right) \nonumber\\
\to& 0 \quad (k \to \infty) . \label{t5e8}
\end{align}
By \eqref{t5e7} and \eqref{t5e8}, we obtain
\begin{align}
&\int_{\T^n} \frac{\nabla w_\infty \cdot \nabla \phi}{\sqrt{1+|\nabla v|^2}}
-\nu(\nabla v) \cdot g_\lambda(x, v+d_\infty)  \phi  \nonumber \\
=&\lim_{k\to \infty} \int_{\T^n} \frac{\nabla T_1(v_{m_k}) \cdot \nabla \phi}{\sqrt{1+|\nabla v_{m_k}|^2}}
-\nu(\nabla v_{m_k}) \cdot g_\lambda(x, v_{m_k}+T_2(v_{m_k}))\phi \nonumber\\
=0, \label{t5e10}
\end{align}
that is, $T(v)=(w_\infty, d_\infty)$. By \eqref{t5e6} and \eqref{t5e10}, Theorem \ref{T5} follows.

\end{proof}
Next, we write the fixed-point theorem, which is needed later (\cite[Theorem 1]{A84}).
An operator $T:X\to A$ is considered weakly sequentially continuous if,
for every sequence $\{x_m\}_{m\in \Na} \subset X$ and $x_\infty \in X$ such that
$x_m$ weakly converges to $x_\infty$, $T(x_m)$ weakly converges to $T(x_\infty)$.
\begin{theo} \label{th4}
	Let $X$ be a metrizable, locally convex topological vector space and $\Omega$ be a weakly compact convex subset of $X$. Then, any weakly
	sequentially continuous map $T: \Omega \to \Omega$ has a fixed point.
\end{theo}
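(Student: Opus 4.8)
The plan is to cut $\Omega$ down to a compact \emph{metrizable} convex subset, on which weak sequential continuity automatically upgrades to ordinary weak continuity, and then to apply Tychonoff's fixed point theorem. Write $w$ for the weak topology $\sigma(X,X^{*})$. Since $X$ is metrizable it is Hausdorff and locally convex, so $X^{*}$ separates the points of $X$; hence $(X,w)$ is again a Hausdorff locally convex topological vector space, on convex subsets of $X$ the $w$-closure coincides with the original closure, and $\Omega$, being weakly compact, is weakly closed and therefore closed.

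First I would build a $T$-invariant separable subset. Fix $x_{0}\in\Omega$ and set inductively $A_{0}:=\{x_{0}\}$ and $A_{m+1}:=\overline{\mathrm{co}}(A_{m}\cup T(A_{m}))$. If $D_{m}$ is a countable dense subset of $A_{m}$, then for $x\in A_{m}$ a sequence in $D_{m}$ converging to $x$ in the original topology also converges weakly, so by weak sequential continuity $T(x)$ lies in the (weak $=$ original) closed convex hull of the countable set $T(D_{m})$; hence $A_{m+1}\subseteq\overline{\mathrm{co}}(A_{m}\cup T(D_{m}))$ is separable, because the closed convex hull of a separable set is separable (use rational convex combinations). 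The $A_{m}$ are increasing, and $A_{m}\subseteq\Omega$ since $\Omega$ is closed and convex with $T(\Omega)\subseteq\Omega$. Put $K:=\overline{\bigcup_{m}A_{m}}$. Then $K$ is a nonempty closed convex separable subset of $\Omega$; it satisfies $T(K)\subseteq K$, since $T$ maps $\bigcup_{m}A_{m}$ into itself and, for $x\in K$, choosing $x_{k}\in\bigcup_{m}A_{m}$ with $x_{k}\to x$ gives $T(x_{k})\to T(x)$ weakly with $T(x_{k})\in K$ and $K$ weakly closed; and, being weakly closed inside the weakly compact $\Omega$, $K$ is weakly compact.

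Next I would show $(K,w)$ is metrizable. As $K$ is separable for the original metric, $K\times K$ is separable metric, so $(K\times K)\setminus\Delta$ (with $\Delta$ the diagonal) is separable metric, hence Lindel\"{o}f. For each $(x,y)$ with $x\neq y$, Hahn--Banach gives $\phi_{x,y}\in X^{*}$ with $\phi_{x,y}(x)\neq\phi_{x,y}(y)$; the sets $\{(u,v):\phi_{x,y}(u)\neq\phi_{x,y}(v)\}$ cover $(K\times K)\setminus\Delta$, and a countable subcover produces a countable family $\{\phi_{j}\}_{j\in\Na}\subset X^{*}$ that separates the points of $K$. Then $x\mapsto(\phi_{j}(x))_{j\in\Na}$ is a continuous injection of the compact Hausdorff space $(K,w)$ into $\R^{\Na}$, hence a homeomorphism onto its image, so $(K,w)$ is compact, convex, and metrizable. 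On a metrizable space sequential continuity is continuity, so $T|_{K}:(K,w)\to(K,w)$ — which is weakly sequentially continuous because $K\subseteq\Omega$ and $T(K)\subseteq K$ — is $w$-continuous. Finally $K$ is a nonempty compact convex subset of the Hausdorff locally convex space $(X,w)$ and $T|_{K}$ is a continuous self-map, so Tychonoff's fixed point theorem yields $x_{*}\in K\subseteq\Omega$ with $T(x_{*})=x_{*}$.

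The hard part will be these two structural steps. Since $T$ is not assumed continuous (or affine) for the original topology, one cannot simply take the closed convex hull of an orbit $\{T^{m}x_{0}\}$, which is what forces the inductive construction of $K$; and the passage from ``separable and weakly compact'' to ``weakly metrizable'' is the crucial topological input (in the spirit of the Eberlein--\v{S}mulian theorem) that makes the purely sequential hypothesis on $T$ usable. Once those are in place, the remainder is a routine invocation of Hahn--Banach and the Schauder--Tychonoff theorem.
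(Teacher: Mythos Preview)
The paper does not prove this theorem; it merely quotes it from \cite{A84} (Arino--Gautier--Penot) as a tool to be applied later. So there is no in-paper proof to compare against.

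Your argument is correct and is essentially the classical proof from that reference: one manufactures a $T$-invariant, separable, weakly compact convex subset $K\subset\Omega$, uses separability together with Hahn--Banach to find a countable point-separating family in $X^{*}$, embeds $(K,w)$ into $\R^{\Na}$ to obtain weak metrizability, and then upgrades weak sequential continuity of $T|_K$ to weak continuity so that the Schauder--Tychonoff theorem applies. Each step is justified: the inductive construction of the $A_m$ keeps separability because $T(A_m)$ lies in the (weak $=$ strong) closed convex hull of the countable set $T(D_m)$; the Lindel\"of argument on $(K\times K)\setminus\Delta$ correctly extracts a countable separating family; and the continuous injection of the compact Hausdorff space $(K,w)$ into $\R^{\Na}$ is indeed a homeomorphism onto its image. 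The only cosmetic point is that your remark about the ``hard part'' is commentary rather than proof, but the mathematical content preceding it is complete.
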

We first prove Theorem \ref{MT} in the case of $ g_\lambda $.
\begin{theo} \label{t7}
There exists a constant $\e_{\ce \label{ce6}}=\e_{\ref{ce6}}(n,p)>0$,
if $\lambda < \frac18$, $\e<\e_{\ref{ce6}}$,
and $g \in W^{1,p}(\T^n \times (-1,1);\R^{n+1})$ satisfies \eqref{t1e1}--\eqref{t1e3}.
Then, there exists a function $u_\lambda \in W^{2,q}(\T^n)$ such that
\begin{align}
- \di \left(\frac{\nabla u_\lambda}{\sqrt{1+|\nabla u_\lambda|^2}} \right)= \nu(\nabla u_\lambda) \cdot g_\lambda(x,u_\lambda(x)) \quad 
\mathrm{on} \  \T^n. \label{t6e1}
\end{align}
\end{theo}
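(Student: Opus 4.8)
The plan is to realize $u_\lambda$ as a constant shift of a fixed point of the operator $T_1$ supplied by Theorem \ref{T5}, using the fixed-point theorem stated in Theorem \ref{th4}. First I would set
\[
\e_{\ref{ce6}} := \min\left\{\e_{\ref{ce3}},\, \e_{\ref{ce5}},\, c_{\ref{t5c1}}^{-6}\right\},
\]
take $X := W^{2,q}_{ave}(\T^n)$ and $\Omega := \mathcal{A}(\e^\frac12)$. Since $1<q<\infty$, $W^{2,q}_{ave}(\T^n)$ is a reflexive Banach space, so the bounded, closed, convex set $\Omega$ is weakly compact and convex, while $X$ is a metrizable locally convex topological vector space; thus the structural hypotheses of Theorem \ref{th4} hold. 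For $v\in\Omega$ and $g$ satisfying \eqref{t1e1}--\eqref{t1e3}, Theorems \ref{T4} and \ref{T5} apply, and \eqref{t5e1} together with \eqref{t1e1} gives
\[
\|T_1(v)\|_{W^{2,q}(\T^n)} \le c_{\ref{t5c1}}\|g\|_{W^{1,p}(\T^n\times(-1,1))} < c_{\ref{t5c1}}\e^\frac23 \le \e^\frac12
\]
by the choice of $\e_{\ref{ce6}}$; since $T_1(v)\in W^{1,2}_{ave}$ has zero mean and lies in $W^{2,q}$, it belongs to $\mathcal{A}(\e^\frac12)$, so $T_1$ maps $\Omega$ into $\Omega$.

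The crux is to verify that $T_1|_\Omega$ is weakly sequentially continuous. I would take $v_m\rightharpoonup v_\infty$ in $W^{2,q}_{ave}(\T^n)$ with $v_m,v_\infty\in\Omega$; since $q>n$, the embedding $W^{2,q}(\T^n)\hookrightarrow\hookrightarrow C^1(\T^n)$ is compact, so $v_m\to v_\infty$ in $C^1(\T^n)$. Then I would rerun the compactness argument from the end of the proof of Theorem \ref{T5}: $\{T_1(v_m)\}$ is bounded in $W^{2,q}$, so any subsequence has a further subsequence with $T_1(v_{m_k})\to w_\infty$ in $C^1(\T^n)$ (with $w_\infty\in W^{2,q}$) and $T_2(v_{m_k})\to d_\infty\in[-\tfrac14,\tfrac14]$, and passing to the limit in \eqref{t4e0} exactly as in \eqref{t5e7}--\eqref{t5e10} shows that $(w_\infty,d_\infty)$ satisfies \eqref{t4e0} with $v=v_\infty$. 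To identify the limit I would invoke uniqueness: writing $F_v(t):=\int_{\T^n}\nu(\nabla v)\cdot g_\lambda(x,v+t)$, the joint continuity of $(v,t)\mapsto F_v(t)$ and $F_{v_{m_k}}(T_2(v_{m_k}))=0$ force $F_{v_\infty}(d_\infty)=0$; since $F_{v_\infty}$ is positive at $\tfrac14$ and negative at $-\tfrac14$ (as in the proof of Theorem \ref{T4}), we get $d_\infty\in(-\tfrac14,\tfrac14)$, and the uniqueness in Theorem \ref{T4} yields $(w_\infty,d_\infty)=(T_1(v_\infty),T_2(v_\infty))$. Since this limit is independent of the subsequence, $T_1(v_m)\to T_1(v_\infty)$ in $C^1$, hence, being bounded in $W^{2,q}$, $T_1(v_m)\rightharpoonup T_1(v_\infty)$ in $W^{2,q}$.

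By Theorem \ref{th4} there is $v\in\mathcal{A}(\e^\frac12)$ with $T_1(v)=v$. Set $c_v:=T_2(v)$ and $u_\lambda:=v+c_v\in W^{2,q}(\T^n)$; since $\nabla u_\lambda=\nabla v$, equation \eqref{t4e0} with $u=v$ reads
\[
\int_{\T^n}\frac{\nabla u_\lambda\cdot\nabla\phi}{\sqrt{1+|\nabla u_\lambda|^2}}=\int_{\T^n}\nu(\nabla u_\lambda)\cdot g_\lambda(x,u_\lambda)\,\phi
\]
for all $\phi\in W^{1,2}(\T^n)$. As $u_\lambda\in W^{2,q}(\T^n)$ with $q>n$, we have $\nabla u_\lambda/\sqrt{1+|\nabla u_\lambda|^2}\in W^{1,q}(\T^n;\R^n)$ and the right-hand side lies in $L^q$ by Lemma \ref{lm3}, so integrating by parts shows that \eqref{t6e1} holds a.e. on $\T^n$.

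The main obstacle I anticipate is precisely the weak sequential continuity of $T_1$ — in particular, controlling the implicitly defined constant $c_v=T_2(v)$ under weak convergence of $v$ and passing to the limit in the nonlinear quantities $\nu(\nabla v)$ and $g_\lambda(x,v+c_v)$. This is exactly what the compact embedding $W^{2,q}\hookrightarrow\hookrightarrow C^1$, the uniqueness in Theorems \ref{theo1} and \ref{T4}, and the strict monotonicity in Lemma \ref{lemma2} are designed to deliver; everything else (the self-map property and the final integration by parts) is routine given the estimates already established.
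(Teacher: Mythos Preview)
Your proposal is correct and follows essentially the same approach as the paper: choose $\e_{\ref{ce6}}=\min\{\e_{\ref{ce3}},\e_{\ref{ce5}},c_{\ref{t5c1}}^{-6}\}$, verify that $T_1$ maps $\mathcal{A}(\e^{\frac12})$ into itself and is weakly sequentially continuous via the compactness/uniqueness argument of \eqref{t5e7}--\eqref{t5e10}, apply Theorem \ref{th4}, and set $u_\lambda=v+T_2(v)$. Your treatment is in fact slightly more careful than the paper's (you justify $d_\infty\in(-\tfrac14,\tfrac14)$ before invoking the uniqueness of Theorem \ref{T4}, and you spell out the passage from the weak identity to \eqref{t6e1}).
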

\begin{proof}
	The set $W^{2,q}(\T^n)$ is a metrizable, locally convex topological vector space, and
	the set $\mathcal{A}(\e^\frac12)$ is a weakly compact convex subset of $W^{2,q}(\T^n)$.
	By \eqref{t1e1} and Theorem \ref{T5}, if 
	$\e<\min\{ \e_{\ref{ce3}},{\e_{\ref{ce5}}},  c_{\ref{t5c1}}^{-6}\}=:\e_{\ref{ce6}} $,
	we have
	\begin{align}
	\|T_1(v)\|_{W^{2,q}(\T^n)} &\leq c_{\ref{t5c1}}\|g\|_{W^{1,p}\left(\T^n \times  \left(-1,1\right)\right)} \nonumber\\
	&\leq  c_{\ref{t5c1}} \e^\frac16 \e^\frac12 \nonumber \\
	&\leq \e^\frac12 \quad 
	\mathrm{for \  any} \   v \in \mathcal{A}(\e^\frac12) , \label{t7e1}
	\end{align}
	that is,
	 $T_1(\mathcal{A}(\e^\frac12)) \subset \mathcal{A}(\e^\frac12)$. 
	Suppose that $\{v_m \}_{m \in \Na}$ weakly converges to $v_\infty$ in the sense of $W^{2,q}(\T^n)$.
	According to Theorem \ref{T5}, there exists a subsequence $\{v_{m_k}\}_{k \in \Na} \subset    \{v_m \}_{m \in \Na}$ 
	such that $T_1(v_{m_k})$ weakly converges to a function $w_\infty \in W^{2,q}(\T^n)$
	in the sense of  $W^{2,q}(\T^n)$ and
	$T_2(v_{m_k})$ converges to a constant $d_\infty \in \left[-\frac14,\frac14\right]$.
	By the same argument \eqref{t5e7}--\eqref{t5e10}, for any $\phi \in W^{2,q}(\T^n)$,
	\begin{align}
	\int_{\T^n} \frac{\nabla w_\infty \cdot \nabla \phi}{\sqrt{1+|\nabla v_\infty|^2}}
	-\nu(\nabla v_\infty) \cdot g_\lambda(x, v_\infty+d_\infty)  \phi  =0,
	\end{align}
	that is, we obtain $ \lim_{k\to \infty}T_1(v_{m_k}) = T_1(v_\infty) $
	by the uniqueness of solution of Theorem \ref{T4}.
	Therefore, every convergent subsequence of $\{T_1(v_m) \}$
	converges to $T_1(v_\infty)$, and $T_1$ is a weakly sequentially continuous map.
	Using Theorem \ref{th4}, we obtain a function
	$v_\lambda \in W^{2,q}_{ave}(\T^n)$ satisfying 
	\begin{align}
		- \di \left(\frac{\nabla v_\lambda}{\sqrt{1+|\nabla v_\lambda|^2}} \right)= \nu(\nabla v_\lambda) \cdot g_\lambda(x,v_\lambda(x)+T_2(v_\lambda)) \quad 
		\mathrm{on} \  \T^n,
	\end{align}
that is, $u_\lambda:= v_\lambda +T_2(v_\lambda)\in W^{2,q}(\T^n)$ satisfying  \eqref{t6e1}.
\end{proof}
\begin{proof}[Proof of Theorem \ref{MT}]
	Suppose $u_{\lambda} \in W^{2,q}(\T^n)$ satisfies $\eqref{t6e1}$.
	By Theorem \ref{T5}, there exists a convergent subsequence
	 $\{u_{\lambda_k}\}_{k \in \Na} \subset \{u_{\lambda}\}_{0<\lambda<\frac18}$
	  with a limit $u_\infty \in  W^{2,q}(\T^n)$ in the sense of $C^1(\T^n)$ and $\lambda_k \to 0$. We show that $u_\infty$ satisfies \eqref{t0e1}. For any $\phi \in W^{1,2}(\T^n)$, we obtain
	  \begin{align}
	  &\int_{\T^n} -\di \left(\frac{\nabla u_{\lambda_k}}{\sqrt{1+|\nabla u_{\lambda_k}|^2}} 
	   -\frac{\nabla u_\infty}{\sqrt{1+|\nabla u_\infty|^2}}\right)\phi \nonumber\\
	   =&\int_{\T^n} \left(\frac{\nabla u_{\lambda_k}}{\sqrt{1+|\nabla u_{\lambda_k}|^2}} 
	   -\frac{\nabla u_\infty}{\sqrt{1+|\nabla u_\infty|^2}}\right) \cdot \nabla \phi \nonumber\\
	   \to&0. \label{t6e2}
	  \end{align}
  Using Lemma \ref{lm3}, we have
	  \begin{align}
	  &\int_{\T^n} \nu(\nabla u_{\lambda_k}) \cdot g_{\lambda_k}(x,u_{\lambda_k})
	  -\nu(\nabla u_{\infty}) \cdot g(x, u_{\infty}) \nonumber \\
	  =&\int_{\T^n} (\nu(\nabla u_{\lambda_k}) -\nu(\nabla u_{\infty}))\cdot g_{\lambda_k}(x,u_{\lambda_k}) \nonumber \\
	  &+\int_{\T^n} \nu(\nabla u_{\infty}) \cdot(g_{\lambda_k}(x, u_{\lambda_k})-
	  g(x,u_{\lambda_k})) \nonumber \\
	  &+\int_{\T^n}\nu(\nabla u_{\infty}) \cdot(g(x,u_{\lambda_k})-
	  g(x, u_{\infty})) \nonumber \\
	  =&c_{\ref{t4c1}}\|\nu(\nabla u_{\lambda_k}) -\nu(\nabla u_{\infty})\|_{C^0(\T^n)}\|g_{\lambda_k}\|_{W^{1,p}(\T^n \times  \left(-1,1\right))} \nonumber \\
	  &+c_{\ref{t4c1}}\|g_{\lambda_k}-g_\infty\|_{W^{1,p}(\T^n \times  \left(-1,1\right))}
	   \nonumber \\
	   &+ \int_{\T^n} \left|\int^{u_{\lambda_k}}_{u_{\infty}} \partial_{n+1} g(x,s)\right|\nonumber\\
	   \to&0.\label{t6e3}
  \end{align}
  By \eqref{t6e2} and \eqref{t6e3}, we obtain
  \begin{align*}
  &\int_{\T^n} -\di \left(\frac{\nabla u_\infty}{\sqrt{1+|\nabla u_\infty|^2}}\right)\phi-\nu(\nabla u_{\infty}) \cdot g(x, u_{\infty})\phi \\
  =&\lim_{k\to \infty} \int_{\T^n}
 -\di \left(\frac{\nabla u_{\lambda_k}}{\sqrt{1+|\nabla u_{\lambda_k}|^2}}\right)\phi-\nu(\nabla u_{\lambda_k}) \cdot g_{\lambda_k}(x, u_{\lambda_k})\phi \\
 \to&0.
  \end{align*}
  Thus, $u_\infty$ satisfies \eqref{t0e1}
  using the fundamental lemma of the calculus of variations.
  By \eqref{t7e1}, we obtain
  \begin{align}
  \left\|u_\infty - \int_{\T^n}u_\infty(y) \ dy \right\|_{W^{2,q}(\T^n)} \leq \e^\frac12,
  \end{align}
  and Theorem \ref{MT} follows.
  
\end{proof}

\end{document}